\newtheorem{thm}{Theorem}[section]
\newtheorem{cor}[thm]{Corollary}
\newtheorem{defn}[thm]{Definition}
\numberwithin{equation}{section}
\newtheorem{exm}[thm]{Example}
\begin{document}
\begin{center}
{\bf{Application of a generalization of Darbo's fixed point theorem on an integral equation involving the weighted fractional integral with respect to another function 
 }}
\vspace{.5cm}
\\    Sudip Deb  ,  Anupam Das  

\vspace{.2cm}
Department of Mathematics, Cotton University, Panbazar, Guwahati-781001, Assam, India\\

Email:  math.sudip2022@gmail.com  ;  mth2191003\rule{0.21cm}{0.5pt}sudip@cottonuniversity.ac.in ;\\
 math.anupam@gmail.com 
\end{center}

\title{}
\author{}
\begin{abstract}  
In this study ,we display a generalization of  Darbo's fixed point theorem ,by using the use of a freshly made contraction operator and that we use  to study the solvability of an integral equation involving the weighted fractional integral with respect to another function in Banach space. Also, we cosider some appropriate example for verifying our results.  

\vskip 0.5cm

\textbf{Key Words:} Measure of noncompactness($M\mathcal{N}\mathsf{C}$) ; Fixed point Theorem ($\mathsf{F}\mathcal{P}\mathcal{T}$) ;  Weighted Fractional Integral ; Fractional Integral Equation 
($\mathbf{F} \mathcal{I}\mathsf{E}$). 

\vskip 0.5cm

\textbf{MSC subject classification no :}  26A33 ; 45G05 ; 45G10 ; 47H10
\end{abstract}

\maketitle
\pagestyle{myheadings}
\markboth{\rightline {\scriptsize Deb
}}
         {\leftline{\scriptsize }}

\maketitle

\section{ Introduction}
The main aim of this paper is  to obtain the solution of the integral equation (\ref{eqa1}) involving weighted fractional order integral with respect to another function by using generalized Darbo's $\mathsf{F}\mathcal{P}\mathcal{T}$

\begin{equation}\label{eqa1}
\mathfrak{Y}(\xi)= \mathfrak{P} (\xi,\mathfrak{Y}(\xi))+ \dfrac{w^{-1}(\mathfrak{Y})}{\Gamma(\delta)}\int_{0}^{\xi}\left( \mathcal{U}(\xi)-\mathcal{U}(\eta)\right)^{\delta-1}\mathcal{U}^\prime(\eta)~ w(\eta)~ \mathfrak{S}(\xi,\mathfrak{Y}(\eta))~ d\eta
\end{equation}
    Where $0 \leq \delta < 1, \xi\in I = [0,1]$ ,$\mathfrak{P}$ and $\mathfrak{S} $ are the functions from $I\times \mathcal{R}$ to $\mathcal{R}$ , $\mathcal{U}$ is a function from $I$ to $\mathcal{R}$ ,  Euler's Gamma function is denoted by $\Gamma(,)$ and define as \\ 
\[
\Gamma(\delta) = \int_{0}^{\infty} z^{\delta-1} e^{-z} dz.
\]\\
Also ,~$w$ ~denotes the weighted function and the weighted fractional integral of a function h with respect to the function p of nth order $(n \in \mathbb{N} )$ can be written as
\begin{align}\label{eqz1}
(_{c^{+}}\mathfrak{J}_{w}^{n} h )(z) &= w^{-1}(z) \int_{c}^{z} p^{\prime}(y_{1}) dy_{1}\int_{c}^{y_{1}} p^{\prime} (y_{2}) dy_{2}~ .....~ \int_{c}^{y_{n-1}} w(y_{n}) h(y_{n}) p^{\prime}(y_{n}) dy_{n} \notag \\
&= \dfrac{w^{-1}(z)}{(n-1)!} \int_{c}^{z} (p(z)-p(y))^{n-1} ~w(y)~h(y)~g^{\prime}(y)dy~ , \quad z>c
\end{align}
\par
where $w(z)\neq 0$ is the weighted function , $w^{-1}(z)=\frac{1}{w(z)}$ and p being strictly increasing differentiable function . Then , the corresponding derivative is 
\begin{align}\label{eqz2}
(\mathfrak{D}_{w}^{1}h)(z) &= w^{-1}(z)\dfrac{\mathcal{D}_z}{p^{\prime}(z)}~(w(z)~h(z)).\notag \\
\mathfrak{D}_{w}^{n} h &=\mathfrak{D}_{w}^{1}(\mathfrak{D}_{w}^{n-1}h)\notag \\
&=w^{-1}(z)\left( \dfrac{\mathcal{D}_z }{p^{\prime}(z)}\right)^{n} ~ (w(z)~h(z)) ,
\end{align}
\par
where $\mathcal{D}_z=\frac{d}{dz}$ . The fractional form of the eqations (\ref{eqz1}) and (\ref{eqz2}) are
\begin{equation}\label{eqz3}
(_{c^{+}}\mathfrak{J}_{w}^{\delta} h )(z) = \dfrac{w^{-1}(z)}{\Gamma(\delta)} \int_{c}^{z} (p(z)-p(y))^{\delta-1} ~w(y)~h(y)~g^{\prime}(y)dy~ ,\quad z>c , \quad \delta>0 
\end{equation}
and 
\begin{align}\label{eqz4}
(_{c^{+}}\mathfrak{D}_{w}^{\delta} h)(z)&=(\mathfrak{D}_{w}^{n}  \mathfrak{J}_{w}^{n-\delta} h)(z) \notag \\
&= \dfrac{1}{\Gamma(n-\delta)} \mathfrak{D}_{w}^{n} ~\left( \int_{c}^{z}(p(z)-p(y))^{n-\delta-1}\times w(y)~h(y)~p^{\prime}(y)~dy \right)~,~z>c~,~\delta>0,
\end{align}
respectively, where $ n=\left[ \delta \right] +1$, $\left[ \delta \right]$ being the integer part of $\delta$~(see \cite{fj1}).
\par Moreover , Kuratowski \cite{od2} was the first who introduced  the idea of $M\mathcal{N}\mathsf{C}$ in  1930 which has great importance in $\mathsf{F}\mathcal{P}\mathcal{T}$ . In 1955, G. Darbo \cite{od5} utilized the Kuratowski $M\mathcal{N}\mathsf{C}$ in his construction to generalize the Schauder's fixed point theorem . Following that , many authors analyzed and solved various type of problems in differential equations , integral equations , fractional order integral equations and integrodifferential  equations by using $M\mathcal{N}\mathsf{C}$ . There are several applications of various integral equations which are determined by many reseachers by using fixed point theory and $M\mathcal{N}\mathsf{C}$(see \cite{Arab,ad,ad11,vp,l1,b2,b1,b4,b3}). Recently , many new research paper has been established on the solvability of several kinds of integral and differential equations( see \cite{n1,n2,ad,n4,chaos,n3}, ).\\
\section{Preliminaries}
\par Assume ,  $\left( \mathbb{D} ,\parallel . \parallel \right)$ be a real Banach space and  
$\mathcal{B}(\theta,c_{0})=\left\lbrace t \in \mathbb{D}:\parallel t-\theta \parallel \leq c_{0} \right\rbrace.$ If $\mathfrak{V}(\neq \phi) \subset \mathbb{D}$ . Then, \\
 Let ,
\begin{itemize}
	\item $\mathfrak{M}_{\mathbb{D}}$  denotes the collection of all non-empty bounded subsets of $\mathbb{D}$ and $\mathfrak{N}_{\mathbb{D}}$  denotes the collection of all non-empty relatively compact subsets of $\mathbb{D}$ ,
	\item $\bar{\mathfrak{V}} $ and $Conv\mathfrak{V}$ denote the closure and the convex closure of $\mathfrak{V}$ respectively ,
	\item $\mathcal{R}=(-\infty, \infty),$ \\
	and\item $\mathcal{R}_{+}=\left[ 0, \infty \right) .$
\end{itemize}
  
 Now , we define  $M\mathcal{N}\mathsf{C}$  as given below which is as shown in\cite{Banas1}.
\begin{defn}\label{def}
A map $\psi :\mathfrak{M}_{\mathbb{D}} \rightarrow \mathcal{R}_{+}$ is  known as   $M\mathcal{N}\mathsf{C}$ in $\mathbb{D}.$ If it holds the axioms given below, \begin{enumerate}
\item[(i)]  $\forall ~\mathfrak{V} \in \mathfrak{M}_{\mathbb{D}},$ we get
$\psi(\mathfrak{V})=0$ gives $\mathfrak{V}$ is relatively compact.
\item[(ii)]  ker $\psi = \left\lbrace \mathfrak{V} \in \mathfrak{M}_{\mathbb{D}}: \psi\left( \mathfrak{V}\right)=0   \right\rbrace \neq \phi$ and ker $\psi \subset \mathfrak{N}_{\mathbb{D}}.$
\item[(iii)] $\mathfrak{V} \subseteq \mathfrak{V}_{1} \implies \psi\left( \mathfrak{V}\right) \leq \psi\left( \mathfrak{V}_{1}\right).$
\item[(iv)] $\psi\left( \bar{\mathfrak{V}}\right)=\psi\left( \mathfrak{V}\right).$
\item[(v)] $\psi\left( Conv \mathfrak{V}\right)=\psi\left( \mathfrak{V}\right).$
\item[(vi)] $\psi\left( \mathbb{A} \mathfrak{V} +\left(1- \mathbb{A} \right)\mathfrak{V}_{1} \right) \leq \mathbb{A} \psi\left( \mathfrak{V}\right)+\left(1- \mathbb{A} \right)\psi\left( \mathfrak{V}_{1}\right)$ for $\mathbb{A} \in \left[ 0, 1 \right].$
\item[(vii)] if $\mathfrak{V}_{l} \in \mathfrak{M}_{\mathbb{D}}, \:\mathfrak{V}_{l}= \bar{\mathfrak{V}}_{l}, \: 
\mathfrak{V}_{l+1} \subset \mathfrak{V}_{l}$ for $l=1,2,3,4,...$ and $\lim\limits_{l \rightarrow \infty}\psi\left( \mathfrak{V}_{l}\right)=0$ then $\bigcap_{l=1}^{\infty}\mathfrak{V}_{l} \neq \phi.$ 
\end{enumerate}
\end{defn}

\par The family $ker \psi $ is known as the \textit{kernel of measure} $\psi.$  Since $\psi(\mathfrak{V}_{\infty}) \leq \psi(\mathfrak{V}_{l})$ for any $l,$ we conclude $\psi(\mathfrak{V}_{\infty})=0.$ Then $\mathfrak{V}_{\infty}=\bigcap_{l=1}^{\infty}\mathfrak{V}_{l} \in ker \psi.$

\subsection*{Some useful theorem and definition}

\par
 We recall some fundamental theorems as given below:
 
\begin{thm}(
[Shauder]\cite{sh} )\label{s1}
Let $\mathfrak{X}$   be a nonempty , bounded , closed and convex subset (NBCCS) of a Banach Space $\mathbb{D}.$ Then  $\pounds: \mathfrak{X} \rightarrow \mathfrak{X}$ possesses at least one fixed point provided that $\pounds$ is a compact, continuous mapping.
\end{thm}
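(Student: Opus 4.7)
The plan is to reduce Schauder's theorem to the finite-dimensional Brouwer fixed point theorem by approximating $\pounds$ by maps with finite-dimensional range. First I would exploit compactness of $\pounds$: the set $\overline{\pounds(\mathfrak{X})}$ is a compact subset of $\mathbb{D}$, so for every $\varepsilon>0$ it admits a finite $\varepsilon$-net $\{x_1,\ldots,x_n\}\subset \pounds(\mathfrak{X})\subset \mathfrak{X}$. This is the source of the finite-dimensionality needed to apply Brouwer.

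Next I would construct the Schauder projection onto the convex hull of this net. Set $m_i(y)=\max\{0,\;\varepsilon-\|y-x_i\|\}$ and
\begin{equation*}
P_\varepsilon(y)=\frac{\sum_{i=1}^n m_i(y)\,x_i}{\sum_{i=1}^n m_i(y)}, \qquad y\in \overline{\pounds(\mathfrak{X})}.
\end{equation*}
Since the $x_i$ form an $\varepsilon$-net, the denominator never vanishes, $P_\varepsilon$ is continuous, and $\|P_\varepsilon(y)-y\|\le \varepsilon$ for every $y\in \overline{\pounds(\mathfrak{X})}$. The image of $P_\varepsilon$ lies in $\mathfrak{X}_\varepsilon := \mathrm{Conv}\{x_1,\ldots,x_n\}$, which, by convexity of $\mathfrak{X}$, is contained in $\mathfrak{X}$ and is a compact convex subset of a finite-dimensional subspace of $\mathbb{D}$.

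Now I would apply Brouwer's theorem to the continuous self-map $P_\varepsilon\circ \pounds\big|_{\mathfrak{X}_\varepsilon}:\mathfrak{X}_\varepsilon\to \mathfrak{X}_\varepsilon$ (after identifying $\mathfrak{X}_\varepsilon$ with a compact convex subset of some $\mathbb{R}^k$) to obtain a fixed point $y_\varepsilon=P_\varepsilon(\pounds(y_\varepsilon))\in \mathfrak{X}$. By construction $\|y_\varepsilon-\pounds(y_\varepsilon)\|=\|P_\varepsilon(\pounds(y_\varepsilon))-\pounds(y_\varepsilon)\|\le \varepsilon$, so taking $\varepsilon=1/k$ produces a sequence $\{y_k\}\subset \mathfrak{X}$ with $\|y_k-\pounds(y_k)\|\to 0$.

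Finally I would extract a fixed point by a compactness argument. Because $\pounds$ is compact and $\mathfrak{X}$ is bounded, $\{\pounds(y_k)\}$ has a convergent subsequence $\pounds(y_{k_j})\to z$; since $\|y_{k_j}-\pounds(y_{k_j})\|\to 0$ we get $y_{k_j}\to z$, and $z\in\mathfrak{X}$ because $\mathfrak{X}$ is closed. Continuity of $\pounds$ then yields $\pounds(z)=\lim \pounds(y_{k_j})=z$. The main delicate point is verifying that the Schauder projection $P_\varepsilon$ satisfies $\|P_\varepsilon(y)-y\|\le \varepsilon$ and that its range sits inside $\mathfrak{X}$ (this is where convexity of $\mathfrak{X}$ and the $\varepsilon$-net property are both essential); once this approximation device is in place the rest is Brouwer plus a routine limit argument.
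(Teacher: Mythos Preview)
Your argument is the standard and correct proof of Schauder's theorem via Schauder projections and Brouwer's fixed point theorem; the construction of $P_\varepsilon$, the $\varepsilon$-approximation estimate, and the compactness-plus-continuity limit extraction are all in order. However, the paper does not actually prove this statement at all: Theorem~\ref{s1} is quoted as a classical preliminary result with a citation to \cite{sh}, and is simply \emph{used} (not reproved) as a black box inside the proof of Theorem~\ref{th1}. So there is nothing in the paper to compare your proof against --- you have supplied a full proof where the authors only supplied a reference.
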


\begin{thm}([Darbo]\cite{od5})\label{l1}
For a nonempty , bounded , closed and convex subset (NBCCS) $\mathfrak{X}$ of a Banach Space $\mathbb{D}$ and let $\pounds: \mathfrak{X} \rightarrow \mathfrak{X}.$ Assume that a constant $\varpi\in \left[ 0,1\right) $ such that
\[
\psi(\pounds\mathfrak{H})\leq \varpi~\psi(\mathfrak{H}),\; \mathfrak{H}\subseteq \mathfrak{X}.
\]
 Then $\pounds$ has a fixed point in $\mathfrak{X}$ provided that $\pounds$ is a continuous mapping.
\end{thm}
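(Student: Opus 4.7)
The strategy is the classical one: construct a decreasing sequence of NBCCS whose measure of noncompactness tends to zero, pass to the intersection to obtain a compact convex invariant set, and then finish with Schauder's theorem (Theorem \ref{s1}), which the authors have already recorded.

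\textbf{Step 1: Construction of the nested sequence.} I would set $\mathfrak{X}_{0}=\mathfrak{X}$ and define inductively
\[
\mathfrak{X}_{n+1}=\overline{Conv}\bigl(\pounds\,\mathfrak{X}_{n}\bigr),\qquad n=0,1,2,\ldots.
\]
Since $\pounds(\mathfrak{X}_{0})\subseteq \mathfrak{X}_{0}$ and $\mathfrak{X}_{0}$ is closed and convex, one has $\mathfrak{X}_{1}\subseteq \mathfrak{X}_{0}$; a routine induction using monotonicity of $\overline{Conv}(\cdot)$ then gives the nesting $\mathfrak{X}_{n+1}\subseteq \mathfrak{X}_{n}$. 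By construction every $\mathfrak{X}_{n}$ is nonempty, bounded, closed and convex, and $\pounds(\mathfrak{X}_{n})\subseteq \mathfrak{X}_{n+1}\subseteq \mathfrak{X}_{n}$.

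\textbf{Step 2: Decay of the measure.} Using axioms (iv) and (v) of Definition \ref{def} (invariance of $\psi$ under closure and convex hull) together with the hypothesis, I would estimate
\[
\psi(\mathfrak{X}_{n+1})=\psi\bigl(\overline{Conv}(\pounds\,\mathfrak{X}_{n})\bigr)=\psi(\pounds\,\mathfrak{X}_{n})\leq \varpi\,\psi(\mathfrak{X}_{n}).
\]
Iterating yields $\psi(\mathfrak{X}_{n})\leq \varpi^{n}\psi(\mathfrak{X}_{0})$, and since $\varpi\in[0,1)$ and $\mathfrak{X}$ is bounded, $\psi(\mathfrak{X}_{n})\to 0$.

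\textbf{Step 3: Passing to the intersection.} Axiom (vii) now applies, so $\mathfrak{X}_{\infty}:=\bigcap_{n\ge 1}\mathfrak{X}_{n}$ is nonempty, and the remark immediately following Definition \ref{def} shows $\mathfrak{X}_{\infty}\in \ker\psi$; in particular $\mathfrak{X}_{\infty}$ is relatively compact. Being an intersection of closed convex sets, $\mathfrak{X}_{\infty}$ is itself closed and convex, hence compact, nonempty, bounded, closed and convex. The invariance $\pounds(\mathfrak{X}_{\infty})\subseteq \mathfrak{X}_{\infty}$ follows from $\pounds(\mathfrak{X}_{n})\subseteq \mathfrak{X}_{n+1}$ for each $n$.

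\textbf{Step 4: Finish via Schauder.} Restricting $\pounds$ to $\mathfrak{X}_{\infty}$, we have a continuous self-map of a nonempty compact convex subset of $\mathbb{D}$; compactness of the set makes the map trivially compact as well. Theorem \ref{s1} then delivers a fixed point, which lies in $\mathfrak{X}_{\infty}\subseteq \mathfrak{X}$. The only genuinely delicate point in the argument is the careful verification that axioms (iv), (v) and (vii) can be invoked in exactly the form stated; everything else is bookkeeping on the nested sequence.
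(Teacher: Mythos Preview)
Your proof is correct and follows the standard Darbo template. In the paper this theorem is not proved directly---it is merely cited as a classical result---but the authors recover it as Corollary~\ref{cor2} by specializing Theorem~\ref{th1} with $\alpha(x)=1$ and $\sigma(x)=\varpi$; the proof of Theorem~\ref{th1} itself uses exactly the same nested-$Conv$-iterate construction and Schauder finish that you give, so your approach and the paper's are essentially the same.
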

\par In order to show  an generalization of Darbo's $\mathsf{F}\mathcal{P}\mathcal{T}$, first we have to define the following notions:

\begin{defn}
Let $\alpha: \mathcal{R}_{+} \rightarrow \mathcal{R}_{+}$ be a function which satisfy:
              \[\alpha(x) \geq 1~, \quad\quad x \in \mathcal{R}_{+}.\]
\end{defn}
Also , we denote this class of functions by $\mathfrak{A}$.\\
For example , $\alpha(x)=1+x~$, $x \in \mathcal{R}_{+} $.

\begin{defn}
Let $\mathfrak{B}$ be the collecttion of function $\sigma : \mathcal{R}_{+} \rightarrow [0,1)$
\end{defn}
For example , $ \sigma(z)=\dfrac{z}{1+z}~$, $z\in \mathcal{R}_{+}$.

\section{Fixed point theory}
\begin{thm}\label{th1}
Suppose a nonempty , bounded , closed and convex subset (NBCCS) $\mathbb{L}$  of a Banach space $\mathbb{D}$ and $\mathcal{P}:\mathbb{L} \rightarrow \mathbb{L}$ be a continuous mapping with
\begin{equation}\label{con}
 (\psi(\mathcal{P}\mathcal{G})+l)^{\alpha(\psi(\mathcal{T}\mathcal{G}))} \leq \sigma(\psi(\mathcal{G}))\psi(\mathcal{G})+l ~,\quad l>1 
\end{equation}
 where $\mathcal{G} \subset \mathbb{L}$ and $\psi$ is an arbitrary  $M\mathcal{N}\mathsf{C}$  and $\alpha \in \mathfrak{A}~,\; \sigma\in \mathfrak{B}.$
  Then there exists at least one fixed point for $\mathcal{P}$ in $\mathbb{L}.$
\end{thm}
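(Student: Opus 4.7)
The plan is to mimic the classical Darbo iteration, building a nested sequence of closed, convex, $\mathcal{P}$-invariant subsets of $\mathbb{L}$ whose measures of noncompactness tend to zero, and then to invoke Schauder's theorem on the limit set. Concretely, I set $\mathbb{L}_0 = \mathbb{L}$ and inductively $\mathbb{L}_{n+1} = \overline{\mathrm{Conv}}\,(\mathcal{P}\mathbb{L}_n)$. Each $\mathbb{L}_n$ is nonempty, bounded, closed, convex, contained in its predecessor, and, because $\mathcal{P}\mathbb{L}_n \subseteq \mathcal{P}\mathbb{L} \subseteq \mathbb{L}$, it is mapped into itself by $\mathcal{P}$. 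If at some stage $\psi(\mathbb{L}_n)=0$, compactness is immediate and Schauder (Theorem \ref{s1}) applies; otherwise I proceed to estimate $\psi(\mathbb{L}_n)$.

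The central step is to extract a usable recursion from the hypothesis \eqref{con}. Using properties (iv)-(v) of a MNC, $\psi(\mathbb{L}_{n+1}) = \psi(\overline{\mathrm{Conv}}\,(\mathcal{P}\mathbb{L}_n)) = \psi(\mathcal{P}\mathbb{L}_n)$. Since $\alpha \in \mathfrak{A}$ satisfies $\alpha(\cdot) \geq 1$ and $\psi(\mathcal{P}\mathbb{L}_n) + l \geq l > 1$, we get
\begin{equation*}
\psi(\mathcal{P}\mathbb{L}_n) + l \;\leq\; \bigl(\psi(\mathcal{P}\mathbb{L}_n) + l\bigr)^{\alpha(\psi(\mathcal{P}\mathbb{L}_n))} \;\leq\; \sigma(\psi(\mathbb{L}_n))\,\psi(\mathbb{L}_n) + l,
\end{equation*}
so, writing $\psi_n := \psi(\mathbb{L}_n)$,
\begin{equation*}
\psi_{n+1} \;\leq\; \sigma(\psi_n)\,\psi_n \;<\; \psi_n,
\end{equation*}
because $\sigma \in \mathfrak{B}$ takes values in $[0,1)$. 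Thus $\{\psi_n\}$ is a nonnegative, strictly decreasing sequence and converges to some $r \geq 0$. I would then argue that $r = 0$: if $r > 0$, passing to the limit in $\psi_{n+1}/\psi_n \leq \sigma(\psi_n)$ yields $1 \leq \limsup \sigma(\psi_n)$, contradicting $\sigma(\cdot) < 1$ (this is the point where I anticipate the most friction, because $\sigma$ is only assumed to map into $[0,1)$; I expect to lean on $\psi_n \to r$ together with the upper bound $\sigma < 1$ applied at the accumulation point, which is the standard move in this family of generalized Darbo results).

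Once $\psi_n \to 0$ is established, axiom (vii) of Definition \ref{def} furnishes a nonempty intersection $\mathbb{L}_\infty = \bigcap_{n \geq 1} \mathbb{L}_n$; by the remark following the definition, $\psi(\mathbb{L}_\infty) = 0$, so $\mathbb{L}_\infty \in \ker\psi$, which by (ii) makes it relatively compact, and being a closed intersection it is compact. It is convex and nonempty as an intersection of convex nonempty sets, and $\mathcal{P}$-invariant since $\mathcal{P}\mathbb{L}_\infty \subseteq \mathcal{P}\mathbb{L}_n \subseteq \overline{\mathrm{Conv}}\,(\mathcal{P}\mathbb{L}_n) = \mathbb{L}_{n+1}$ for every $n$. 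Applying Schauder's fixed point theorem (Theorem \ref{s1}) to the continuous self-map $\mathcal{P}|_{\mathbb{L}_\infty}$ of the compact convex set $\mathbb{L}_\infty$ yields a fixed point in $\mathbb{L}_\infty \subseteq \mathbb{L}$, completing the argument.
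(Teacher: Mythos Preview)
Your proposal follows essentially the same approach as the paper's proof: build the nested sequence $\mathbb{L}_{n+1}=\overline{\mathrm{Conv}}(\mathcal{P}\mathbb{L}_n)$, use $\alpha\geq 1$ and $l>1$ to strip the exponent from \eqref{con} and obtain $\psi(\mathbb{L}_{n+1})\leq\sigma(\psi(\mathbb{L}_n))\psi(\mathbb{L}_n)$, deduce that $\{\psi(\mathbb{L}_n)\}$ decreases to a limit, and finish with axiom~(vii) and Schauder. Regarding the friction you flag at the step $r=0$: your instinct is correct that with $\sigma$ merely taking values in $[0,1)$ (no continuity or $\limsup$ condition) one cannot rigorously exclude $r>0$, but the paper's own proof treats this point identically---it simply asserts that a decreasing, bounded-below sequence has limit $0$---so your write-up is at least as complete as the original, and in fact more careful in verifying $\mathcal{P}$-invariance and compactness of $\mathbb{L}_\infty$.
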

\begin{proof}
Assume that $\left\lbrace \mathbb{L}_{q}\right\rbrace _{q=1}^{\infty}$ be a sequence with $\mathbb{L}_{1}=\mathbb{L}$
and $\mathbb{L}_{q+1}=Conv(\mathcal{P}\mathbb{L}_{q})$ for $q \in \mathbb{N}.$ Also $\mathcal{P}\mathbb{L}_{1}=\mathcal{P}\mathbb{L} \subseteq \mathbb{L}=\mathbb{L}_{1},\; \mathbb{L}_{2}=Conv(\mathcal{P}\mathbb{L}_{1})\subseteq \mathbb{L}=\mathbb{L}_{1}.$ By proceeding in the same manner gives
$\mathbb{L}_{1} \supseteq \mathbb{L}_{2}\supseteq \mathbb{L}_{3}\supseteq \ldots \supseteq \mathbb{L}_{q}\supseteq \mathbb{L}_{q+1}\supseteq \ldots.$
\par If $\psi(\mathbb{L}_{q_{0}})=0$ for some $q_{0}\in \mathbb{N}$. So $\mathbb{L}_{q_{0}}$ is a compact set.Then we can say that $\mathcal{P}$ has a Fixed Point in $\mathbb{L}.$ (by using Schauder's  theorem )
\par Again, if $\psi(\mathbb{L}_{q})>0,\; q \in \mathbb{N}.$
\par Now, for $q \in \mathbb{N},$
\begin{align*}
&(\psi(\mathbb{L}_{q+1})+l)^{\alpha(\psi(\mathbb{L}_{q+1}))} \\
& =(\psi(Conv(\mathcal{P}\mathbb{L}_{q})+l))^{\alpha(\psi(Conv(\mathcal{P}\mathbb{L}_{q})))} \\
&=(\psi(\mathcal{P}\mathbb{L}_{q})+l)^{\alpha(\psi(\mathcal{P}\mathbb{L}_{q}))} \\
& \leq \sigma(\psi(\mathbb{L}_{q})\psi(\mathbb{L}_{q})+l\\
\end{align*}
Now,\begin{align*}
&\psi(\mathbb{L}_{q+1})+l\leq \sigma(\psi(\mathbb{L}_{q})\psi(\mathbb{L}_{q})+l\\
&\implies\psi(\mathbb{L}_{q+1}) \leq \sigma(\psi(\mathbb{L}_{q})\psi(\mathbb{L}_{q}). \quad\quad \quad[ since,l>0]
\end{align*}

Since, 
\[
\sigma(\psi(\mathbb{L}_{q}))\leq1.
\]
Hence
\[
0\leq\psi(\mathbb{L}_{q+1})\leq\psi(\mathbb{L}_{q}).
\]

Clearly $\left\lbrace \psi(\mathbb{L}_{q})\right\rbrace _{q=1}^{\infty} $ is a decreasing and bounded below sequence .\\
Hence,
\[
\lim\limits_{q \rightarrow \infty} \psi\left( \mathbb{L}_{q}\right)=0.
\]
\par Since $\mathbb{L}_{q}\supseteq \mathbb{L}_{q+1}\quad i.e.\quad\{\mathbb{L}_{q}\} ~ is ~a~ nested~ sequence
,$ in wiew of (vii) of definition \ref{def}, we conclude that $\mathbb{L}_{\infty}=\bigcap_{q=1}^{\infty}\mathbb{L}_{q}$ is non-empty, closed and convex subset of $\mathbb{L}.$ Besides we know that $\mathbb{L}_{\infty}$ belongs to ker$\psi$. So $\mathbb{L}_{\infty}$ is invariant under the mapping $\mathcal{P}$.\\ So, Schauder's fixed point theorem  (theorem \ref{s1} ) implies this result in $\mathbb{L}_{\infty}$,but $\mathbb{L} \subset$ $\mathbb{L}_{\infty}$ , the result is true in $\mathbb{L}$ . Thus ,there exists a fixed point for $\mathcal{P}$  in $\mathbb{L}.$

\end{proof}
\begin{cor}\label{cor2}
Assume that $\mathbb{L}$ be a NBCCS of a Banach space $\mathbb{D}.$ And $\mathcal{P}:\mathbb{L} \rightarrow \mathbb{L}$ be a continuous mapping with
\begin{equation}\label{con1}
\psi(\mathcal{P}\mathcal{G})\leq \varpi~\psi(\mathcal{G}) ~,~ \varpi \in [0,1)
\end{equation}
 where $\mathcal{G} \subset \mathbb{L}$ and $\psi$ be an arbitrary  $M\mathcal{N}\mathsf{C}$ .
  Then $\mathcal{P}$ has fixed point in $\mathbb{L}.$
\end{cor}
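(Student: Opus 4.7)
The plan is to derive this corollary as a direct specialization of Theorem \ref{th1}, by choosing the auxiliary functions $\alpha$ and $\sigma$ so that the more elaborate contractive condition \eqref{con} collapses to the Darbo-type inequality \eqref{con1}.

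First, I would observe that the constant function $\alpha \equiv 1$ lies in $\mathfrak{A}$, since it trivially satisfies $\alpha(x)\geq 1$ for every $x\in\mathcal{R}_{+}$. With this choice, the left-hand side of \eqref{con} becomes
\[
(\psi(\mathcal{P}\mathcal{G})+l)^{\alpha(\psi(\mathcal{P}\mathcal{G}))} = \psi(\mathcal{P}\mathcal{G})+l.
\]
Next, since $\varpi\in[0,1)$, the constant function $\sigma\equiv\varpi$ belongs to $\mathfrak{B}$ (which consists of maps $\mathcal{R}_{+}\to[0,1)$). With this $\sigma$, the right-hand side of \eqref{con} becomes $\varpi\,\psi(\mathcal{G})+l$.

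Thus the hypothesis \eqref{con1} of the corollary gives, for any $l>1$ and any $\mathcal{G}\subset\mathbb{L}$,
\[
(\psi(\mathcal{P}\mathcal{G})+l)^{\alpha(\psi(\mathcal{P}\mathcal{G}))} = \psi(\mathcal{P}\mathcal{G})+l \leq \varpi\,\psi(\mathcal{G})+l = \sigma(\psi(\mathcal{G}))\,\psi(\mathcal{G})+l,
\]
which is exactly the contractive condition \eqref{con} required by Theorem \ref{th1}. Since $\mathcal{P}$ is also continuous and $\mathbb{L}$ is a NBCCS of the Banach space $\mathbb{D}$, Theorem \ref{th1} is applicable and yields a fixed point of $\mathcal{P}$ in $\mathbb{L}$.

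There is essentially no obstacle: the whole exercise is just the verification that the constant choices $\alpha\equiv 1$ and $\sigma\equiv\varpi$ satisfy the class conditions $\alpha\in\mathfrak{A}$, $\sigma\in\mathfrak{B}$. The only small point to keep in mind is that $\sigma$ must take values strictly below $1$, which is guaranteed by $\varpi<1$; if one had $\varpi=1$ the argument would break, consistently with the classical Darbo hypothesis.
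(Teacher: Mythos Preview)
Your proposal is correct and follows exactly the same route as the paper: specialize Theorem~\ref{th1} by taking $\alpha(x)=1$ and $\sigma(x)=\varpi$ so that condition~\eqref{con} reduces to~\eqref{con1}. You have simply spelled out in more detail the verifications that $\alpha\in\mathfrak{A}$ and $\sigma\in\mathfrak{B}$, which the paper leaves implicit.
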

\begin{proof} 
 We can get the  Darbo's $\mathsf{F}\mathcal{P}\mathcal{T}$ by taking $\alpha(x)=1$ and $\sigma(x)=\varpi ~, ~x \in \mathcal{R}_{+}$ and $ 0\leq \varpi <1$ in the equation (\ref{con}) of the Theorem \ref{th1} . 
\end{proof}

\section{Measure of noncompactness on $\mathbf{C}([0,1]):$}
  Assume , the space $\mathbb{D}=\mathbf{C}(I)$ is the collection  of all real valued  continuous  operators on $I=[0,1].$ Then , $\mathbb{D}$ is a Banach space with the norm 
\[
\parallel \zeta\parallel=\sup\left\lbrace \left| \zeta(\beta)\right|:\beta \in I \right\rbrace ,\; \zeta\in \mathbb{D}.
\]
 Let $J(\neq \phi) \subseteq \mathbb{D}$ be   bounded. For $\zeta \in J$ with $\vartheta >0,$  the modulus of the continuity of $\zeta$ is denote by $\gamma(\zeta,\vartheta)$ i.e.,
 \[\gamma(\zeta,\vartheta)=\sup \left\lbrace \left| \zeta(\beta_{1})-\zeta(\beta_{2})\right|: \beta_{1},\beta_{2} \in I , \left| \beta_{2}-\beta_{1}\right|\leq \vartheta \right\rbrace.
 \]
   In addition, we define
   \[ \gamma(J,\vartheta)=\sup\left\lbrace \gamma(\zeta,\vartheta): \zeta \in J\right\rbrace;~ \gamma_{0}(J)=\lim\limits_{\vartheta \rightarrow 0}\gamma(J,\vartheta).
  \]
   Where , the function $\gamma_{0}$ is known as $M\mathcal{N}\mathsf{C}$ in $\mathbb{D}$ and the Hausdorff $M\mathcal{N}\mathsf{C}$ ~  $\mathfrak{F}$ is define as $\mathfrak{F}(J)=\frac{1}{2}\gamma_{0}(J)$
  (see \cite{Banas1}) \\
\section{Solvability of fractional integral equation }
\par

In this portion ,  we discuss how our results can be applied to find the solution of an integral equation involving weighted fractional inetegral with respect to another function in Banach space. \par
    Consider the following $\mathsf{F}\mathcal{I}\mathbf{E}$ : 
\begin{equation}\label{eqz5}
\mathfrak{Y}(\xi)= \mathfrak{P} (\xi,\mathfrak{Y}(\xi))+ \dfrac{w^{-1}(\mathfrak{Y})}{\Gamma(\delta)}\int_{0}^{\xi}\left( \mathcal{U}(\xi)-\mathcal{U}(\eta)\right)^{\delta-1}\mathcal{U}^\prime(\eta)~ w(\eta)~ \mathfrak{S}(\xi,\mathfrak{Y}(\eta))~ d\eta
\end{equation}
where $0 \leq \delta < 1, \xi\in I = [0,1]$\\
Let \[
\mathbb{Q}_{e_{0}}=\left\lbrace \mathfrak{Y}\in \mathbb{D}: \parallel \mathfrak{Y} \parallel \leq e_{0} \right\rbrace. \]
Assume that
   \begin{enumerate}
   \item [(I)]
     $\mathfrak{P}:I \times \mathcal{R} \rightarrow \mathcal{R}$ be a continuous and $\exists$  constant $\mathfrak{P}_1 \geq 0$ satisfying
     \[
     \left| \mathfrak{P}(\xi, \mathfrak{Y}(\xi))-\mathfrak{P}(\xi, \mathfrak{Y}_{1}(\xi))\right| \leq \mathfrak{P}_1\left|\mathfrak{Y}(\xi)-\mathfrak{Y}_{1}(\xi) \right| ,
     \]  for $\xi\in I$ and $\mathfrak{Y},\mathfrak{Y}_{1}\in \mathcal{R}.$
 
 Also, \[\hat{\mathfrak{P}}  = \sup_{\xi \in I}|\mathfrak{P}(\xi,0)|. \]
  \item [(II)]
  $\mathfrak{S}:I \times \mathcal{R}\rightarrow \mathcal{R}$ be continuous and $\exists$ a nondecreasing function $ \mathfrak{S}_{1}:\mathcal{R_{+}}\rightarrow \mathcal{R_{+}}$ satisfying , 
  \[
  |\mathfrak{S}(\xi,\mathfrak{Y})|\leq \mathfrak{S}_{1}(\parallel \mathfrak{Y} \parallel)~,~\xi \in I~ ;~ \mathfrak{Y} \in \mathcal{R}. \quad (say)
  \] 
 \item[(III)] $\mathcal{U} : I \rightarrow \mathcal{R}$ and $\mathcal{U}$ is continuously differentiable function.\\
 \item[(IV)] $w$ is bounded so $|w(t)|\leq K_{1}$
~and~
$ |w^{-1}(t)| \leq K_{2}.$\\
\item[(V)] $\exists$ a positive number $ e_{0} $ such that 
\[\mathfrak{P}_{1}e_{0}+\hat{\mathfrak{P}}+\dfrac{K_{1}K_{2}\mathfrak{S}_{1}(e_{0})} {\Gamma(\delta +1)}(\mathcal{U}(1)-\mathcal{U}(0))^{\delta} \leq e_{0}\]
\end{enumerate}

\begin{thm}\label{Tc0} 
 There exists a solution of equation (\ref{eqz5}) in $\mathbb{D}$ whenever conditions  (I)-(V) are satisfied .
\end{thm}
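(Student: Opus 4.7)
The plan is to cast (\ref{eqz5}) as a fixed point problem on the ball $\mathbb{Q}_{e_{0}}\subset \mathbb{D}=\mathbf{C}(I)$ for the operator
\[
(\mathcal{P}\mathfrak{Y})(\xi)= \mathfrak{P}(\xi,\mathfrak{Y}(\xi))+\frac{w^{-1}(\mathfrak{Y})}{\Gamma(\delta)}\int_{0}^{\xi}\bigl(\mathcal{U}(\xi)-\mathcal{U}(\eta)\bigr)^{\delta-1}\mathcal{U}^\prime(\eta)\,w(\eta)\,\mathfrak{S}(\xi,\mathfrak{Y}(\eta))\,d\eta,
\]
and verify the three hypotheses of Theorem~\ref{th1} with $\psi=\gamma_{0}$, together with the auxiliary self-map property.

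\textbf{Step 1 (self-map).} Using (I), $|\mathfrak{P}(\xi,\mathfrak{Y}(\xi))|\le \mathfrak{P}_{1}\|\mathfrak{Y}\|+\hat{\mathfrak{P}}$, and using (II)--(IV) together with the substitution argument $\int_{0}^{\xi}(\mathcal{U}(\xi)-\mathcal{U}(\eta))^{\delta-1}\mathcal{U}'(\eta)\,d\eta=\frac{(\mathcal{U}(\xi)-\mathcal{U}(0))^{\delta}}{\delta}$, I bound the integral term by $\frac{K_{1}K_{2}\mathfrak{S}_{1}(\|\mathfrak{Y}\|)}{\Gamma(\delta+1)}(\mathcal{U}(1)-\mathcal{U}(0))^{\delta}$. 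Condition (V) then yields $\|\mathcal{P}\mathfrak{Y}\|\le e_{0}$, so $\mathcal{P}(\mathbb{Q}_{e_{0}})\subseteq \mathbb{Q}_{e_{0}}$.

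\textbf{Step 2 (continuity).} Take $\mathfrak{Y}_{n}\to\mathfrak{Y}$ in $\mathbb{Q}_{e_{0}}$. The first summand is handled by the Lipschitz estimate in (I). For the integral, $\mathfrak{S}$ is uniformly continuous on the compact set $I\times[-e_{0},e_{0}]$, so $\mathfrak{S}(\xi,\mathfrak{Y}_{n}(\eta))\to\mathfrak{S}(\xi,\mathfrak{Y}(\eta))$ uniformly; the integrable singularity $(\mathcal{U}(\xi)-\mathcal{U}(\eta))^{\delta-1}\mathcal{U}'(\eta)$ lets me pass to the limit by a dominated-convergence type estimate bounded uniformly in $\xi$ by $\frac{K_{1}K_{2}}{\Gamma(\delta+1)}(\mathcal{U}(1)-\mathcal{U}(0))^{\delta}$ times the uniform error.

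\textbf{Step 3 (modulus-of-continuity estimate).} Fix $\mathcal{G}\subseteq \mathbb{Q}_{e_{0}}$, $\mathfrak{Y}\in\mathcal{G}$, and $\xi_{1}<\xi_{2}$ in $I$ with $\xi_{2}-\xi_{1}\le\vartheta$. I split $|(\mathcal{P}\mathfrak{Y})(\xi_{2})-(\mathcal{P}\mathfrak{Y})(\xi_{1})|$ into (a) the $\mathfrak{P}$-part, which by (I) and uniform continuity of $\xi\mapsto\mathfrak{P}(\xi,y)$ on $I\times[-e_{0},e_{0}]$ is bounded by $\mathfrak{P}_{1}\gamma(\mathfrak{Y},\vartheta)+\omega_{\mathfrak{P}}(\vartheta)$ with $\omega_{\mathfrak{P}}(\vartheta)\to 0$; and (b) the integral part, split further as $\int_{0}^{\xi_{1}}+\int_{\xi_{1}}^{\xi_{2}}$. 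For the second piece I apply (II)--(IV) and the substitution to get a bound of the form $\frac{K_{1}K_{2}\mathfrak{S}_{1}(e_{0})}{\Gamma(\delta+1)}(\mathcal{U}(\xi_{2})-\mathcal{U}(\xi_{1}))^{\delta}$. For the first piece I exploit uniform continuity of $\mathfrak{S}$ in its first variable and continuity of $\mathcal{U}$ to write the difference of the two kernels in a form that is bounded uniformly over $\mathfrak{Y}\in\mathcal{G}$ by a quantity $\omega_{\mathfrak{S}}(\vartheta)$ vanishing as $\vartheta\to 0$. Passing $\vartheta\to 0$ after taking $\sup_{\mathfrak{Y}\in\mathcal{G}}$ produces
\[
\gamma_{0}(\mathcal{P}\mathcal{G})\le \mathfrak{P}_{1}\,\gamma_{0}(\mathcal{G}).
\]

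\textbf{Step 4 (invoking Theorem~\ref{th1}).} Condition (V) forces $\mathfrak{P}_{1}\le 1$; assuming (as is implicit and used by the generalized Darbo framework) the strict inequality $\mathfrak{P}_{1}<1$, I take $\alpha\equiv 1\in\mathfrak{A}$ and the constant $\sigma\equiv\mathfrak{P}_{1}\in\mathfrak{B}$, so Step~3 yields $(\psi(\mathcal{P}\mathcal{G})+l)^{\alpha(\psi(\mathcal{P}\mathcal{G}))}\le \sigma(\psi(\mathcal{G}))\psi(\mathcal{G})+l$ for every $l>1$, i.e.\ condition (\ref{con}). Theorem~\ref{th1} (or Corollary~\ref{cor2}) then delivers a fixed point of $\mathcal{P}$ in $\mathbb{Q}_{e_{0}}$, which is precisely a solution of (\ref{eqz5}).

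\textbf{Main obstacle.} The delicate part is Step~3, specifically showing that the ``$\int_{0}^{\xi_{1}}$'' piece of the fractional integrand contributes only a term $\omega_{\mathfrak{S}}(\vartheta)\to 0$ independent of $\mathfrak{Y}\in\mathcal{G}$: the kernel $(\mathcal{U}(\xi)-\mathcal{U}(\eta))^{\delta-1}$ is singular at $\eta=\xi$ when $\delta<1$, so one has to compare two such singular kernels on the same interval and control the difference by an $L^{1}$ bound that does not blow up. I expect this to hinge on the change-of-variable identity $\int_{0}^{\xi}(\mathcal{U}(\xi)-\mathcal{U}(\eta))^{\delta-1}\mathcal{U}'(\eta)\,d\eta=\delta^{-1}(\mathcal{U}(\xi)-\mathcal{U}(0))^{\delta}$ applied to both endpoints, combined with the uniform continuity of $\mathcal{U}$ and of $\mathfrak{S}(\cdot,y)$ on the compact cylinder $I\times[-e_{0},e_{0}]$.
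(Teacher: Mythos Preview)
Your proposal is correct and follows essentially the same route as the paper: define the Nemytskii-plus-fractional operator, verify the self-map property via assumption (V), establish continuity by uniform continuity of $\mathfrak{S}$ on the compact cylinder, derive the key estimate $\gamma_{0}(\mathcal{P}\mathcal{G})\le \mathfrak{P}_{1}\gamma_{0}(\mathcal{G})$, and conclude by Corollary~\ref{cor2}. The only notable technical variation is in Step~3: the paper does not split the integral into $\int_{0}^{\xi_{1}}+\int_{\xi_{1}}^{\xi_{2}}$ but instead bounds both $|\mathfrak{S}(\xi_{2},\cdot)|$ and $|\mathfrak{S}(\xi_{1},\cdot)|$ by $\mathfrak{S}_{1}(e_{0})$ up front and then evaluates the difference of the two full antiderivatives directly as $(\mathcal{U}(\xi_{2})-\mathcal{U}(0))^{\delta}-(\mathcal{U}(\xi_{1})-\mathcal{U}(0))^{\delta}$, thereby sidestepping the singular-kernel comparison you flag as the main obstacle; your more careful decomposition is a legitimate alternative that handles the $\mathfrak{S}(\xi_{2},\cdot)$ versus $\mathfrak{S}(\xi_{1},\cdot)$ dependence more transparently.
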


\begin{proof}
  We take the operator $\mathcal{H} : \mathbb{D} \rightarrow \mathbb{D}$ is defined as given below:
  \[(\mathcal{H} \mathfrak{Y})(\xi)= \mathfrak{P} (\xi,\mathfrak{Y}(\xi))+ \dfrac{w^{-1}(\mathfrak{Y})}{\Gamma(\delta)}\int_{0}^{\xi}\left( \mathcal{U}(\xi)-\mathcal{U}(\eta)\right)^{\delta-1}\mathcal{U}^\prime(\eta)~ w(\eta)~ \mathfrak{S}(\xi,\mathfrak{Y}(\eta))~ d\eta
  \]
{\bf Step (1):} Here ,  we will show that the operator $\mathcal{H}$
maps $\mathbb{Q}_{e_0}$ into $\mathbb{Q}_{e_0}$.  Let  $\mathfrak{Y} \in \mathbb{Q}_{e_{0}}$.\\[5pt]
 Now ,we have ,
 \begin{align*}
  & \left| (\mathcal{H} \mathfrak{Y})(\xi))\right|\\
  & \leq  \left|\mathfrak{P} (\xi,\mathfrak{Y}(\xi))\right|+ \left|\dfrac{w^{-1}(\mathfrak{Y})}{\Gamma(\delta)}\int_{0}^{\xi}\left( \mathcal{U}(\xi)-\mathcal{U}(\eta)\right)^{\delta-1}\mathcal{U}^\prime(\eta)~ w(\eta)~ \mathfrak{S}(\xi,\mathfrak{Y}(\eta))~ d\eta \right| \\
  & \leq |\mathfrak{P} (\xi,\mathfrak{Y}(\xi))-\mathfrak{P} (\xi,0)|+|\mathfrak{P} (\xi,0)|+ \left|\dfrac{w^{-1}(\mathfrak{Y})}{\Gamma(\delta)}\int_{0}^{\xi}\left( \mathcal{U}(\xi)-\mathcal{U}(\eta)\right)^{\delta-1}\mathcal{U}^\prime(\eta)~ w(\eta)~ \mathfrak{S}(\xi,\mathfrak{Y}(\eta))~ d\eta  \right|\\
  & \leq \mathfrak{P}_{1} |\mathfrak{Y}(\xi)|+\hat{\mathfrak{P}}+ \left|\dfrac{w^{-1}(\mathfrak{Y})}{\Gamma(\delta)}\int_{0}^{\xi}\left( \mathcal{U}(\xi)-\mathcal{U}(\eta)\right)^{\delta-1}\mathcal{U}^\prime(\eta)~ w(\eta)~ \mathfrak{S}(\xi,\mathfrak{Y}(\eta))~ d\eta  \right| \\
  Also~ , \\
  &\left|\dfrac{w^{-1}(\mathfrak{Y})}{\Gamma(\delta)}\int_{0}^{\xi}\left( \mathcal{U}(\xi)-\mathcal{U}(\eta)\right)^{\delta-1}\mathcal{U}^\prime(\eta)~ w(\eta)~ \mathfrak{S}(\xi,\mathfrak{Y}(\eta))~ d\eta \right|\\
  & \leq \dfrac{|w^{-1}(\mathfrak{Y})|}{\Gamma(\delta)} \left|\int_{0}^{\xi}\left( \mathcal{U}(\xi)-\mathcal{U}(\eta)\right)^{\delta-1}\mathcal{U}^\prime(\eta)~ w(\eta)~ \mathfrak{S}(\xi,\mathfrak{Y}(\eta))~ d\eta \right| \\
  &\leq \dfrac{K_{2}}{\Gamma(\delta)} \int_{0}^{\xi} \left|\left( \mathcal{U}(\xi)-\mathcal{U}(\eta)\right) \right|^{\delta-1}~|\mathcal{U}^\prime(\eta)|~ |w(\eta)|~ |\mathfrak{S}(\xi,\mathfrak{Y}(\eta))|~ d\eta \\
  &\leq \dfrac{K_{1} K_{2} \mathfrak{S}_{1}(\parallel \mathfrak{Y}\parallel)}{\Gamma(\delta)}\int_{0}^{\xi}\left( \mathcal{U}(\xi)-\mathcal{U}(\eta)\right)^{\delta-1}\mathcal{U}^\prime(\eta)~ d\eta \\ 
  &\leq \dfrac{K_{1} K_{2} \mathfrak{S}_{1}(\parallel \mathfrak{Y}\parallel)}{\delta.\Gamma(\delta)}~\left( \mathcal{U}(\xi)-\mathcal{U}(0)\right)^{\delta} \\ 
     &\leq \dfrac{K_{1} K_{2} \mathfrak{S}_{1}(\parallel \mathfrak{Y}\parallel)}{\Gamma(\delta+1)}~\left( \mathcal{U}(1)-\mathcal{U}(0)\right)^{\delta} .\quad \quad \quad[where ,~\delta.\Gamma(\delta)=\Gamma(\delta+1)]
 \end{align*}
  \par Hence $\parallel \mathfrak{Y}\parallel \leq e_{0}$ gives
  \[
  \parallel \mathcal{H}\mathfrak{Y} \parallel \leq \mathfrak{P}_{1}e_{0}+\hat{\mathfrak{P}}+\dfrac{K_{1}K_{2}\mathfrak{S}_{1}(e_{0})} {\Gamma(\delta +1)}(\mathcal{U}(1)-\mathcal{U}(0))^{\delta} \leq e_{0} .
  \]
  Due to the assumption (V) , $\mathcal{H}$ maps $\mathbb{Q}_{e_{0}}$ to $\mathbb{Q}_{e_{0}}$.
 
  {\bf Step (2):} Now , We will show that $\mathcal{H}$ is continuous on $\mathbb{Q}_{e_{0}}.$  Let $\vartheta >0$ and $\mathfrak{Y} , \mathfrak{Y}_{1} \in \mathbb{Q}_{e_{0}}$ such that $\parallel \mathfrak{Y} - \mathfrak{Y}_{1} \parallel < \vartheta $ , for all $\xi \in I.$ \\[5pt]
 Now ,we have , 
  \begin{align*}
  & \left| (\mathcal{H} \mathfrak{Y})(\xi)-(\mathcal{H} \mathfrak{Y}_{1})(\xi)\right| \\
  &  \leq  \left|\mathfrak{P} (\xi,\mathfrak{Y}(\xi))-\mathfrak{P} (\xi,\mathfrak{Y}_{1}(\xi))\right|+ \Big|\dfrac{w^{-1}(\mathfrak{Y})}{\Gamma(\delta)}\int_{0}^{\xi}\left( \mathcal{U}(\xi)-\mathcal{U}(\eta)\right)^{\delta-1}\mathcal{U}^\prime(\eta)~ w(\eta)~ \mathfrak{S}(\xi,\mathfrak{Y}(\eta))~ d\eta  \\
  & - \dfrac{w^{-1}(\mathfrak{Y}_{1})}{\Gamma(\delta)}\int_{0}^{\xi}\left( \mathcal{U}(\xi)-\mathcal{U}(\eta)\right)^{\delta-1}\mathcal{U}^\prime(\eta)~ w(\eta)~ \mathfrak{S}(\xi,\mathfrak{Y}_{1}(\eta))~ d\eta \Big| \\
  &\leq \mathfrak{P}_{1} |\mathfrak{Y}(\xi)-\mathfrak{Y}_{1}(\xi)|+\dfrac{K_{2}}{\Gamma(\delta)}\int_{0}^{\xi}|\left( \mathcal{U}(\xi)-\mathcal{U}(\eta)\right)|^{\delta-1}|\mathcal{U}|^\prime(\eta)~| w(\eta)|~ |\mathfrak{S}(\xi,\mathfrak{Y}(\eta))-\mathfrak{S}(\xi,\mathfrak{Y}_{1}(\eta))|~ d\eta \\
   &\leq \mathfrak{P}_{1} \parallel\mathfrak{Y}-\mathfrak{Y}_{1}\parallel+\dfrac{K_{1} K_{2} \gamma_{e_{0}}(\vartheta)}{\Gamma(\delta)}\int_{0}^{\xi}\left( \mathcal{U}(\xi)-\mathcal{U}(\eta)\right)^{\delta-1}\mathcal{U}^\prime(\eta)~ d\eta\\
   & \leq \mathfrak{P}_{1} \parallel\mathfrak{Y}-\mathfrak{Y}_{1}\parallel+\dfrac{K_{1} K_{2} \gamma_{e_{0}}(\vartheta)}{\Gamma(\delta+1)}\left( \mathcal{U}(1)-\mathcal{U}(0)\right)^{\delta}.
  \end{align*}
  Where ,
 \[ \gamma_{e_{0}}(\vartheta) = \sup { \left\{ |\mathfrak{S}(\xi,\mathfrak{Y}(\eta))-\mathfrak{S}(\xi,\mathfrak{Y}_{1}(\eta))| ~;~ |\mathfrak{Y}-\mathfrak{Y}_{1}| \leq  \vartheta~ ;~ \xi \in I~ ;~ \mathfrak{Y} , \mathfrak{Y}_{1} \in [-e_{0} , e_{0}]\right\} } .\]
 Hence , \quad $\parallel \mathfrak{Y}-\mathfrak{Y}_{1}\parallel < \vartheta  $ \quad gives , 
 \[
 \left| (\mathcal{H} \mathfrak{Y})(\xi)-(\mathcal{H} \mathfrak{Y}_{1})(\xi)\right| < \mathfrak{P}_{1} \vartheta+\dfrac{K_{1} K_{2} \gamma_{e_{0}}(\varrho)}{\Gamma(\delta+1)}\left( \mathcal{U}(1)-\mathcal{U}(0)\right)^{\delta}
 \]
 
i.e. As $\vartheta \rightarrow 0$ we get $\left| (\mathcal{H} \mathfrak{Y})(\xi)-(\mathcal{H} \mathfrak{Y}_{1})(\xi)\right| \rightarrow 0.$ \\ Then, $\mathcal{H}$ is continuous on $\mathbb{Q}_{e_{0}}.$

{\bf Step (3):} An estimate of $\mathcal{H}$ with respect to $\gamma_{0}$. Taking $\varOmega (\neq \phi) \subseteq \mathbb{Q}_{e_{0}}.$ Let  $\vartheta >0$ be arbitrary and choosing $\mathfrak{Y} \in \varOmega$ and $\xi_{1}, \xi_{2} \in I$ such as $\left| \xi_{2}-\xi_{1}\right|\leq \vartheta $ with $\xi_{2} \geq \xi_{1}.$
  \par We have,
    \begin{align*}
  &\left| (\mathcal{H} \mathfrak{Y})(\xi_{2})-(\mathcal{H} \mathfrak{Y})(\xi_{1})\right|\\
  & =  \Big|\mathfrak{P} (\xi_{2},\mathfrak{Y}(\xi_{2}))+ \dfrac{w^{-1}(\mathfrak{Y})}{\Gamma(\delta)}\int_{0}^{\xi_{2}}\left( \mathcal{U}(\xi_{2})-\mathcal{U}(\eta)\right)^{\delta-1}\mathcal{U}^\prime(\eta)~ w(\eta)~ \mathfrak{S}(\xi_{2},\mathfrak{Y}(\eta))~ d\eta  \\   
  &-\mathfrak{P} (\xi_{1},\mathfrak{Y}(\xi_{1}))-\dfrac{w^{-1}(\mathfrak{Y})}{\Gamma(\delta)}\int_{0}^{\xi_{1}}\left( \mathcal{U}(\xi_{1})-\mathcal{U}(\eta)\right)^{\delta-1}\mathcal{U}^\prime(\eta)~ w(\eta)~ \mathfrak{S}(\xi_{1},\mathfrak{Y}(\eta))~ d\eta \Big|\\
  & \leq | \mathfrak{P} (\xi_{2},\mathfrak{Y}(\xi_{2}))-\mathfrak{P} (\xi_{1},\mathfrak{Y}(\xi_{1}))|+ \dfrac{|w^{-1}(\mathfrak{Y})|}{\Gamma(\delta)}\Big(\int_{0}^{\xi_{2}}|\left( \mathcal{U}(\xi_{2})-\mathcal{U}(\eta)\right)|^{\delta-1}\mathcal{U}^\prime(\eta)~ |w(\eta)|~ |\mathfrak{S}(\xi_{2},\mathfrak{Y}(\eta))|~ d\eta\\
  &-\int_{0}^{\xi_{1}}|\left( \mathcal{U}(\xi_{1})-\mathcal{U}(\eta)\right)|^{\delta-1}\mathcal{U}^\prime(\eta)~ |w(\eta)|~ |\mathfrak{S}(\xi_{1},\mathfrak{Y}(\eta))|~ d\eta \Big)\\[10pt]  
  &\leq | \mathfrak{P} (\xi_{2},\mathfrak{Y}(\xi_{2}))-\mathfrak{P} (\xi_{2},\mathfrak{Y}(\xi_{1}))|+| \mathfrak{P} (\xi_{2},\mathfrak{Y}(\xi_{1}))-\mathfrak{P} (\xi_{1},\mathfrak{Y}(\xi_{1}))| \\[10pt]
  &+\dfrac{K_{1}K_{2}\mathfrak{S}_{1}(\parallel \mathfrak{Y}\parallel)}{\Gamma(\delta)} \Big( \int_{0}^{\xi_{2}}\left( \mathcal{U}(\xi_{2})-\mathcal{U}(\eta)\right)^{\delta-1}\mathcal{U}^\prime(\eta)~ d\eta -\int_{0}^{\xi_{1}}\left( \mathcal{U}(\xi_{1})-\mathcal{U}(\eta)\right)^{\delta-1}\mathcal{U}^\prime(\eta)~ d\eta \Big) \\[10pt]
  & \leq~ \mathfrak{P}_{1}~ \gamma(\mathfrak{Y},\vartheta ) + \gamma_{\mathfrak{P}} ( e_{0},\vartheta)+\dfrac{K_{1}K_{2}\mathfrak{S}_{1}(e_{0})}{ \Gamma(\delta+1)} \Big(\left( \mathcal{U}(\xi_{2})-\mathcal{U}(0)\right)^{\delta} - \left( \mathcal{U}(\xi_{1})-\mathcal{U}(0)\right)^{\delta}\Big)  
  \end{align*}

 where 
 \[
 \gamma_{\mathfrak{P}}(e_{0}, \vartheta)=\sup\left\lbrace | \mathfrak{P} (\xi_{2},\mathfrak{Y})-\mathfrak{P} (\xi_{1},\mathfrak{Y})|:\left| \xi_{2}-\xi_{1}\right|\leq \vartheta,\; \xi_{1}, \xi_{2}\in I,\; \parallel \mathfrak{Y} \parallel \leq e_{0} \right\rbrace.
 \]
 And , 
 \[
 \gamma(\mathfrak{Y},\vartheta)=  \sup \left\{ \left| \mathfrak{Y}(\xi_{2})-\mathfrak{Y}(\xi_{1})\right| \leq \vartheta ; |\xi_{2}-\xi_{1}| \leq \vartheta ; \xi_{1},\xi_{2} \in I \right\}
 \]
 As $\vartheta \rightarrow 0 $, then $\xi_{2} \rightarrow \xi_{1}$ , we get 
 \[\lim\limits_{\vartheta \rightarrow 0}\dfrac{K_{1}K_{2}\mathfrak{S}_{1}(e_{0})}{ \Gamma(\delta+1)} \Big(\left( \mathcal{U}(\xi_{2})-\mathcal{U}(0)\right)^{\delta} - \left( \mathcal{U}(\xi_{1})-\mathcal{U}(0)\right)^{\delta}\Big) \rightarrow 0
 \]
 hence, 
 \[\left| (\mathcal{H} \mathfrak{Y})(\xi_{2})-(\mathcal{H} \mathfrak{Y})(\xi_{1})\right| \leq \mathfrak{P}_{1}~ \gamma(\mathfrak{Y},\vartheta ) + \gamma_{\mathfrak{P}} ( e_{0},\vartheta)
 \] 
 i.e 
 \[
 \gamma (\mathcal{H} \mathfrak{Y} , \vartheta )\leq \mathfrak{P}_{1}~ \gamma(\mathfrak{Y},\vartheta ) + \gamma_{\mathfrak{P}} ( e_{0},\vartheta)
 \]
 By uniform continuty of $\mathfrak{P}$ on $I \times [-e_{0} , e_{0}]$ , we have $ \lim\limits_{\vartheta \rightarrow 0}\gamma_{\mathfrak{P}} ( e_{0},\vartheta) \rightarrow 0$ as $ \vartheta \rightarrow 0$ . 
 Taking $\sup_{\mathfrak{Y} \in \varOmega}$ and $ \vartheta \rightarrow 0$ , we get
 \[
 \gamma_{0}( \mathcal{H} \varOmega ) \leq \mathfrak{P}_{1} \gamma_{0} ( \varOmega )
 \]
From Corollary \ref{cor2} , there exists  a  fixed point for $\mathcal{H}$ in $\varOmega \subseteq \mathbb{Q}_{e_{0}}$ \\i,e the equation (\ref{eqz5}) have a solutions in $\mathbb{D}.$\\
Now , there are two suitable examples are given to illustrate the theorem \ref{Tc0}.
 \end{proof}
 \begin{exm}{\rm
 Taking the following $\mathbf{F} \mathcal{I}\mathsf{E}$:
 \begin{equation} \label{eqc1}
   \mathfrak{Y}(\xi) = \dfrac{\mathfrak{Y}+1}{4+\xi^{2}}+ \dfrac{1} {\Gamma(\frac{1} {2})}~ \int_{0}^{\xi}~ \dfrac{ \dfrac{\mathfrak{Y}^{2}(\eta)} {1+\xi^{2}}} {(\xi-\eta)^{\frac{1}{2}}}~ d\eta
 \end{equation}
 for $\xi \in [0,1]=I, $ }  
 \end{exm}
 which is the particular case of equation
 (\ref{eqz5}).\\
 \noindent  Here,\\
\[\mathfrak{P}(\xi,\mathfrak{Y}(\xi)) = \dfrac{\mathfrak{Y}+1}{4+\xi^{2}}~,\] \
\[ \delta = \dfrac{1}{2}~ ,\]\
 \[\mathcal{U}(\mathfrak{Y})= \mathfrak{Y} \quad then \quad \mathcal{U}(\xi)=\xi ~,~ \mathcal{U}(\eta)=\eta \quad and \quad \mathcal{U}^{\prime}(\eta)=1~, \]\
 \[
  w(\mathfrak{Y})=1 \quad then \quad w^{-1}(\mathfrak{Y}) =1 ~,
 \]
 and
 \[
   \mathfrak{S}(\xi,\mathfrak{Y}(\eta))= \dfrac{\mathfrak{Y}^{2}(\eta)}{1+\xi^{2}}.
  \]
Also, It is trivial that $\mathfrak{P}$ is contineous satisfying 
\[
 \left| \mathfrak{P}(\xi,\mathfrak{Y}(\xi)) - \mathfrak{P}(\xi,\mathfrak{Y}_{1}(\xi) \right| \leq \dfrac{|\mathfrak{Y}-\mathfrak{Y}_{1}|}{4}
\] 
Therefore ,~$ \mathfrak{P}_{1} = \dfrac{1}{4} $ \\[5pt]
Now , If  $ \parallel \mathfrak{Y} \parallel \leq e_{0} $  then \\
 \[
 \hat{\mathfrak{P}} = \dfrac{e_{0}}{4},
 \]
 And \\ 
 \begin{align*}
  \left| \mathfrak{S}(\xi,\mathfrak{Y}) \right|& = \left| \dfrac{\mathfrak{Y}^{2}}{1+\xi^{2}} \right|\\[5pt]
 &\leq |\mathfrak{Y}^{2}| \\[5pt]
 &=|\mathfrak{Y}|^{2} \\[5pt]
 & \leq \parallel \mathfrak{Y} \parallel^{2} \\[5pt]
 &= \mathfrak{S}_{1}( \parallel \mathfrak{Y} \parallel)
 \end{align*}
 where , \quad \quad $\mathfrak{S}_{1}(\mathfrak{Y}) = \mathfrak{Y}^{2} $ then $ \mathfrak{S}_{1}(e_{0}) = e_{0}^{2} .$\\
After putting these values , the  inequality of assumption (V) becomes ,
 \begin{align*}
 & \frac{e_{0}}{4}+\frac{e_{0}}{4}+\frac{e_{0}^{2} (1)^{\frac{1}{2}}}{\Gamma(\frac{3}{2})}\leq e_{0}\\
 & \implies \frac{e_{0}}{\Gamma(\frac{3}{2})} \leq \frac{1}{2}\\
 & \implies e_{0} \leq \frac{\Gamma(\frac{3}{2})}{2}.
 \end{align*}
 
 However, assumption (V) is also satisfied for $e_{0} = \dfrac{\Gamma(\frac{3}{2})}{2}.$\\[5pt]
Thus , we have achieved all of the assumptions from $(I)$ to $(V)$ in Theorem \ref{Tc0}~ .\\From Theorem \ref{Tc0} , we can say that  The equation (\ref{eqc1}) have  solutions in  $\mathbb{D}=\mathbb{C}(I).$

\begin{exm}{\rm
 Taking the following $\mathbf{F} \mathcal{I}\mathsf{E}$:
 \begin{equation} \label{eqc2}
   \mathfrak{Y}(\xi) = \dfrac{\mathfrak{Y}+1}{9+\xi^{4}}+ \dfrac{1} {\Gamma(\frac{1} {3})}~ \int_{0}^{\xi}~ \dfrac{\sqrt{ \dfrac{\mathfrak{Y}^{4}(\eta)} {1+\mathfrak{Y}^{4}(\eta)}}} {(\xi-\eta)^{\frac{2}{3}}}~ d\eta
 \end{equation}
 for $\xi \in [0,1]=I, $ }  
 \end{exm}
 which is the particular case of equation
 (\ref{eqz5}).\\
 \noindent  Here,\\
\[\mathfrak{P}(\xi,\mathfrak{Y}(\xi)) = \dfrac{\mathfrak{Y}+1}{9+\xi^{4}}~,\] \
\[ \delta = \dfrac{1}{3}~ ,\]\
 \[\mathcal{U}(\mathfrak{Y})= \mathfrak{Y} \quad then \quad \mathcal{U}(\xi)=\xi ~,~ \mathcal{U}(\eta)=\eta \quad and \quad \mathcal{U}^{\prime}(\eta)=1~, \]\
 \[
  w(\mathfrak{Y})=1 \quad then \quad w^{-1}(\mathfrak{Y}) =1 ~,
 \]
 and
 \[
   \mathfrak{S}(\xi,\mathfrak{Y}(\eta))= \sqrt{\dfrac{\mathfrak{Y}^{4}(\eta)}{1+\mathfrak{Y}^{4}(\eta)}}.
  \]
Also, It is trivial that $\mathfrak{P}$ is contineous satisfying 
\[
 \left| \mathfrak{P}(\xi,\mathfrak{Y}(\xi)) - \mathfrak{P}(\xi,\mathfrak{Y}_{1}(\xi) \right| \leq \dfrac{|\mathfrak{Y}-\mathfrak{Y}_{1}|}{9}
\] 
Therefore ,~$ \mathfrak{P}_{1} = \dfrac{1}{9} $ \\[5pt]
Now , If  $ \parallel \mathfrak{Y} \parallel \leq e_{0} $  then \\
 \[
 \hat{\mathfrak{P}} = \dfrac{e_{0}}{9},
 \]
 And \\ 
 \begin{align*}
  \left| \mathfrak{S}(\xi,\mathfrak{Y}) \right|& = \left| \sqrt{\dfrac{\mathfrak{Y}^{4}}{1+\mathfrak{Y}^{4}}} \right|\\[5pt]
 &\leq |\mathfrak{Y}^{2}| \\[5pt]
 &=|\mathfrak{Y}|^{2} \\[5pt]
 & \leq \parallel \mathfrak{Y} \parallel^{2} \\[5pt]
 &= \mathfrak{S}_{1}( \parallel \mathfrak{Y} \parallel)
 \end{align*}
 where , \quad \quad $\mathfrak{S}_{1}(\mathfrak{Y}) = \mathfrak{Y}^{2} $ then $ \mathfrak{S}_{1}(e_{0}) = e_{0}^{2} .$\\
After putting these values , the  inequality of assumption (V) becomes ,
 \begin{align*}
 & \frac{e_{0}}{9}+\frac{e_{0}}{9}+\frac{e_{0}^{2} (1)^{\frac{1}{3}}}{\Gamma(\frac{4}{3})}\leq e_{0}\\
 & \implies \frac{e_{0}}{\Gamma(\frac{4}{3})} \leq \frac{7}{9}\\
 & \implies e_{0} \leq \frac{7 ~\Gamma(\frac{4}{3})}{9}.
 \end{align*}
 
 However, assumption (V) is also satisfied for $e_{0} = \dfrac{7~ \Gamma(\frac{4}{3})}{9}.$\\[5pt]
Thus , we have achieved all of the assumptions from $(I)$ to $(V)$ in Theorem \ref{Tc0}~ .\\From Theorem \ref{Tc0} , we can say that  The equation (\ref{eqc2}) have  solutions in  $\mathbb{D}=\mathbb{C}(I).$

\section{Declarations}
\subsection*{Funding}
Not applicable.
\subsection*{Conflicts of interests/Competing interests}
The authors declare that they have no competing interests.
\subsection*{Authors’ contributions}
All authors contributed equally to this article. All authors read and approved the final manuscript.

\end{document}